\theoremstyle{plain}
\newtheorem*{thm*}{Theorem}
\newtheorem{thm}{Theorem}
\Crefname{thm}{Theorem}{Theorems}
\newtheorem*{lem*}{Lemma}
\newtheorem{lem}[thm]{Lemma}
\Crefname{lem}{Lemma}{Lemmas}
\newtheorem*{claim*}{Claim}
\crefname{claim}{Claim}{Claims}
\Crefname{claim}{Claim}{Claims}
\Crefname{prop}{Proposition}{Propositions}
\crefname{cor}{Corollary}{Corollaries}
\crefname{conj}{Conjecture}{Conjectures}
\Crefname{qn}{Question}{Questions}
\Crefname{obs}{Observation}{Observations}
\Crefname{ex}{Example}{Examples}
\theoremstyle{definition}
\Crefname{prob}{Problem}{Problems}
\Crefname{defn}{Definition}{Definitions}
\theoremstyle{remark}
\newtheorem*{rem}{Remark}
\renewenvironment{proof}[1][]{\begin{trivlist}
\item[\hspace{\labelsep}{\bf\noindent Proof#1.\/}] }{\qed\end{trivlist}}
\newcommand{\remove}[1]{}
\title{\vspace{-1 cm}
A Note on Powers of Paths in Tournaments}
\date{}
\author{Nemanja Dragani\'{c}\thanks{Department of Mathematics, ETH Zurich, nemanja.draganic@math.ethz.ch}\and David Munhá Correia\thanks{Department of Mathematics, ETH Zurich, david.munhacanascorreia@math.ethz.ch}
\and Benny Sudakov\thanks{Department of Mathematics, ETH Zurich, benjamin.sudakov@math.ethz.ch}}
\begin{document}
\maketitle
%\vspace{-0.8 cm}
%\begin{abstract}
%\end{abstract}

\section{Introduction}
A folklore result in graph theory states that every tournament contains a directed hamiltonian path. It is then natural to ask whether every tournament contains the $k$-th power $P_n^k$ of a directed path of length $n$, i.e., the graph with vertex set $\{0,\ldots,n\}$ and whose directed edges are $(i,j)$ with $i < j \leq i+k$. This was answered negatively for $k \geq 2$ by Yuster \cite{yuster2020paths}, who showed that there exist tournaments on $n$ vertices which don't contain $P_{2n/3}^2$. This raises a question - how long of a $k$-th power of a path can we always find in a tournament on $n$ vertices? 

Let us denote by $\ell(n,k)$ the maximal $l$ such that every tournament on $n$ vertices contains the $k$-th power of a path of length $\ell$. Yuster proved that every tournament on $n$ vertices contains the square of a path of length $n^{0.295}$. In the same paper, an upper bound of $\ell(n,k) \leq kn/2^{k/2}$ is shown. Recently, Gir\~{a}o \cite{girao2020note} improved the lower bound to $\ell(n,k) \geq n^{1-o(1)}$ for fixed $k$, using the regularity method. Finally, Scott and Kor\'{a}ndi \cite{korandi2020powers} gave a linear bound of $\ell(n,k) \geq n/2^{2^{3k}}$ and asked for the optimal constant $c_k$ for which $\ell(n,k) = (1+o(1))c_kn$. We shorten the gap between $n/2^{2^{3k}}$ and $kn/2^{k/2}$, by showing that $c_k\geq \frac{1}{2^{6k + 7}}$, thus proving that the dependence on $k$ is exponential.

\begin{thm}
For all integers $n,k \geq 2$ we have $\ell(n,k) \geq \frac{n}{2^{6k + 7}}$.
\end{thm}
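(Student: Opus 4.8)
The plan is to build the $k$-th power of a path by concatenating many short \emph{transitive blocks}. Call an ordered $k$-tuple $A=(a_1,\dots,a_k)$ a \emph{transitive $k$-block} if $a_p\to a_q$ for all $p<q$, and say that $A$ \emph{dominates} a transitive $k$-block $B=(b_1,\dots,b_k)$ if $a_p\to b_q$ whenever $q\le p$ (so the last vertex of $A$ beats all of $B$, the second-to-last beats all but the last of $B$, and so on). A short computation gives the \emph{concatenation lemma}: if $B_1,\dots,B_t$ are vertex-disjoint transitive $k$-blocks with $B_i$ dominating $B_{i+1}$ for every $i$, then $B_1B_2\cdots B_t$ is a copy of $P^{k}_{tk-1}$. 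Indeed, the $p$-th vertex of $B_i$ beats the $k-p$ later vertices of $B_i$ by transitivity and the first $p$ vertices of $B_{i+1}$ by domination, which is exactly the next $k$ vertices; and since each block has $k$ vertices, no constraint arises between vertices lying in non-consecutive blocks. Thus it suffices to find a chain of $t\ge n/(k\,2^{6k+6})$ transitive $k$-blocks, each dominating the next.

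Two ingredients feed into this. First, every tournament on at least $2^{k-1}$ vertices contains a transitive $k$-block: iteratively take a vertex of maximum out-degree and restrict to its out-neighbourhood, which at least halves in size each time. By the same iteration one can extract from any set $S$ a transitive $k$-block whose common out-neighbourhood inside $S$ has size at least $|S|/2^{k}-2$, i.e.\ a block dominating a $\approx 2^{-k}$-fraction of $S$. Second, to organise \emph{which} block dominates which, I would fix a \emph{median order} $v_1,\dots,v_n$ of $T$, i.e.\ a vertex ordering maximising the number of forward edges. Its key local property is that for every $i$ and every $m$ the vertex $v_i$ beats at least half of the next $m$ vertices $v_{i+1},\dots,v_{i+m}$; so ``dominated'' correlates with ``appearing later,'' giving a left-to-right direction along which to grow the chain.

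The crux—and the step I expect to be by far the hardest—is to produce \emph{linearly} many blocks rather than only logarithmically many. The naive greedy, where one keeps a reservoir equal to the common out-neighbourhood of the last block and picks the next block inside it, shrinks the reservoir by a factor $2^{k}$ per step and so halts after $O(\log n)$ blocks; this loss is unavoidable once one insists that each block dominate \emph{all} later blocks. The whole point of the concatenation lemma is that each block need only dominate the \emph{single} next block, so one must reuse vertices, feeding the in-neighbourhoods discarded at one step back into later parts of the chain. To make this rigorous I would partition the median order into consecutive windows of size $2^{\Theta(k)}$ and try to select in each window a transitive $k$-block dominant enough that the at most $k$ sets of vertices it fails to beat in the next window together cover less than half of it, leaving a common out-neighbourhood large enough to host the next block. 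Guaranteeing that such dominant blocks can be found at \emph{every} boundary—equivalently, that the local median property can be boosted from ``beats half'' to ``beats a $(1-\tfrac{1}{2k})$-fraction'' on the relevant vertices, perhaps after deleting a controlled number of troublesome vertices or choosing the windows adaptively—is the delicate point, and the interplay between this depth-$k$ demand and the window size is what forces the factor $2^{\Theta(k)}$ and ultimately pins down the constant $6$ in the exponent.

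Finally, assembling the chain through the concatenation lemma yields a copy of $P^{k}$ of length $tk-1$, and with $t\gtrsim n/(k\,2^{6k+6})$ this is at least $n/2^{6k+7}$; the few leftover vertices at the tail of the median order and the rounding in the window sizes are absorbed into the constant. The boundary is immediate: for $n\le 2^{6k+7}$ the bound is at most $1$ and any edge suffices, the case $k=1$ is a Hamilton path of length $n-1$, and monotonicity in $n$ handles non-divisibility.
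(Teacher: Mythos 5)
Your framework---a median order, vertex-disjoint transitive $k$-blocks each dominating the next, and the concatenation lemma---is exactly the skeleton of the paper's proof (your staircase notion of domination is even a slight refinement: the paper simply arranges that \emph{every} edge from one block to the next points forward). Your two preliminary ingredients are also fine. But the theorem lives entirely in the step you yourself flag as ``the crux'' and then leave open: you never prove that a suitable block can be found at every boundary, you only describe what such a proof would need to accomplish. Moreover, the specific plan you sketch---boosting the median-order property from ``beats half'' to ``beats a $(1-\frac{1}{2k})$-fraction'' of the next window, possibly after deleting troublesome vertices---aims at a statement that is both stronger than necessary and not clearly true: a median order guarantees each vertex only a majority in any later interval, the intersection of $k$ sets that are merely half-sized can already be empty for $k\geq 2$, and there is no evident reason why every window should contain a transitive $k$-block beating all but a $\frac{1}{2k}$-fraction of its successor.

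The paper's resolution of this crux is different and removes the need for any boosting. It works at two scales: inside each window of length $t=2^{6k}$ it keeps only a subset $U$ of size $r=2^{3k}=\sqrt{t}$, and it asks the chosen block to dominate not most of the next window but merely some $r$ vertices of \emph{one of} the next $99$ windows---a $2^{-3k}$-fraction of that region. This much weaker demand is what a counting argument can deliver: the median order gives every vertex of $U$ at least $49t$ out-neighbours in the union $B$ of the next $99$ windows; take a transitive set $X'\subseteq U$ of size $3k$ (possible since $|U|=2^{3k}$), and if no $k$-subset of $X'$ had $99r$ common out-neighbours in $B$, then double-counting the pairs (``$k$-subset of $X'$ dominating $y\in B$'') and applying Jensen's inequality to the convex function $\binom{x}{k}$ would force $t\binom{1.48k}{k} < r\binom{3k}{k}$, which is false for $r=2^{3k}$, $t=2^{6k}$. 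Pigeonholing the $99r$ common out-neighbours over the $99$ windows produces the next $r$-set, and since each step advances at most $99t$ positions, one gets linearly many blocks and hence length $n/2^{6k+7}$. So the exponent $6k$ does not come from strengthening the local median property at all; it comes from choosing $r=\sqrt{t}$ so that the required out-neighbourhood is a small enough fraction for the estimate $\binom{3k}{k}/\binom{1.48k}{k} < 2^{3k}$ to close the count. Without this lemma (or an equivalent, e.g.\ dependent random choice, as the paper remarks), your proposal is a correct reduction but not a proof.
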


\section{Proof of Theorem 2}

\begin{proof}
Let $T$ be a tournament on $n$ vertices. First, we order the vertices of $T$ as $v_0,v_1 ,\dots, v_{n-1}$ so that we have the maximal amount of edges which are oriented in increasing direction. For all $i \leq j$, define $V[i,j) = \{v_i, \dots, v_{j-1}\}$.

We begin by giving the following lemma.

\begin{lem}
Let $r = 2^{3k}$ and $t = 2^{6k}$. Let also $0 \leq i \leq n - 100t$ and take a subset $U \subseteq V[i,i+t)$ such that $|U| = r$. Then, there exists a subset $X \subseteq U$ with the following properties:
\begin{itemize}
    \item $|X| = k$
    \item $X$ induces a transitive tournament.
    \item There is a set $U' \subseteq V[i',i'+t)$, for some $i' \in \{i+t,i+2t,\ldots,i+99t\}$, such that $|U'| = r$ and $U'$ lies in the common out-neighbourhood of $X$. 
\end{itemize}
\end{lem}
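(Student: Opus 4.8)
The plan is to combine the local structure of the chosen vertex ordering with a double counting argument. Throughout write $N^+(x)$ for the out-neighbourhood of a vertex $x$, put $R = V[i+t,i+100t)$, and for $s \in \{1,\dots,99\}$ let $W_s = V[i+st,i+(s+1)t)$, so that $R = W_1 \cup \dots \cup W_{99}$, each $W_s = V[i',i'+t)$ with $i' = i+st \in \{i+t,\dots,i+99t\}$, and $|R| = 99t$. Note the hypothesis $i \le n-100t$ guarantees that $R$ lies inside the tournament.

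First I would extract the key consequence of the ordering. Since $v_0,\dots,v_{n-1}$ maximises the number of increasing edges, moving any $v_a$ to the position just after $v_b$ (with $b>a$) cannot increase this number; comparing the two orderings shows that $v_a$ beats at least half of $\{v_{a+1},\dots,v_b\}$. Applying this to each $u \in U$, with $v_b$ the last vertex of $R$: as $U \subseteq V[i,i+t)$, the suffix strictly after $u$ has at least $99t$ vertices, so $u$ beats at least $99t/2$ of them, of which fewer than $t$ lie before $R$. Hence every $u \in U$ satisfies $|N^+(u) \cap R| \ge 48.5\,t$, i.e. $u$ beats more than a $0.489$ fraction of $R$.

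The main difficulty is to make $X$ transitive and give it a common out-neighbourhood of size $\ge r$ in a single window at the same time. Demanding transitivity appears to force extracting $X$ from a large set, but the common out-neighbourhood of a set of size $m$ can be as small as $|R|\cdot 0.489^{m}$, which is useless already for $m \approx 2^k$; meanwhile the common out-neighbourhood of a $k$-set is a $k$-fold intersection, so a crude union bound fails. This is the crux, and it is exactly where the calibration $|U| = r = 2^{3k}$ enters: \emph{fix transitivity first}. Since $|U| = 2^{3k}$, the tournament on $U$ contains a transitive subtournament $Z$ with $|Z| \ge 3k+1$, so any $X \subseteq Z$ is automatically transitive, and it remains only to choose $X \in \binom{Z}{k}$ whose common out-neighbourhood in $R$ is large.

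For this I would select $X$ by averaging over $R$. For $w \in R$ let $p(w) = |\{z \in Z : z \to w\}|$; then $\sum_{w\in R} p(w) = \sum_{z\in Z} |N^+(z)\cap R| \ge |Z|\cdot 48.5\,t$, so the average $\bar p$ satisfies $\bar p \ge 0.489\,|Z| > 1.46k$. A uniformly random $X \in \binom{Z}{k}$ lies in $\{z : z\to w\}$ with probability $\binom{p(w)}{k}/\binom{|Z|}{k}$, so the expected size of $\,R \cap \bigcap_{x\in X} N^+(x)\,$ equals $\sum_{w}\binom{p(w)}{k}\big/\binom{|Z|}{k} \ge |R|\,\binom{\bar p}{k}\big/\binom{|Z|}{k}$ by convexity of $\binom{\cdot}{k}$ (legitimate since $\bar p > k$). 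A short computation using $\binom{\bar p}{k}/\binom{|Z|}{k} \ge \bigl((\bar p-k)/|Z|\bigr)^k \ge \bigl(0.46k/(3k+1)\bigr)^k \ge 2^{-3k}$ bounds this expectation below by $|R|\,2^{-3k} = 99\cdot 2^{3k} = 99r$. Hence some $X \in \binom{Z}{k}$ has at least $99r$ common out-neighbours in $R$, and since $R = W_1 \cup \dots \cup W_{99}$, one window $W_s$ contains at least $r$ of them. Taking $U'$ to be $r$ of these common out-neighbours and $i' = i+st$ yields all three properties. The role of the constants is now transparent: $|U| = 2^{3k}$ makes $|Z| \approx 3k$, so the cost $\binom{|Z|}{k}$ of pinning down $X$ is only single-exponential in $k$, while $t = 2^{6k}$ together with the $99$ windows is precisely what lets the $2^{-3k}$ intersection loss still leave $r$ common out-neighbours inside one window.
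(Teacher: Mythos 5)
Your proposal is correct and is essentially the paper's own argument: both use the exchange property of the maximising order to show each vertex of $U$ beats roughly half of $V[i+t,i+100t)$, both first extract a transitive subtournament of size about $3k$ from $U$ (possible since $|U|=2^{3k}$), and both then locate the $k$-set $X$ via convexity of $\binom{\cdot}{k}$ and Jensen, followed by pigeonhole over the $99$ blocks of length $t$. The only difference is presentational: you phrase the double count as an expectation over a uniformly random $k$-subset of the transitive set, whereas the paper runs the identical count as a proof by contradiction (note also that your closing estimate $0.46k/(3k+1)\ge 1/8$ needs $k\ge 2$, which is exactly the regime in which the lemma is invoked).
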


\begin{proof}
First, we note that every vertex $v \in U$ has at least $49t$ out-neighbours in $B = V[i+t,i+100t)$. Indeed, otherwise suppose we change the ordering of the vertices of $T$ by moving $v$ to the end of the interval $B$. The number of edges oriented in increasing order will increase, which contradicts the maximality of the initial ordering.

Secondly, since $|U| \geq 2^{3k-1}$, we know that there is a subset $X' \subseteq U$ of size $3k$ which induces a transitive tournament. This is a standard exercise which we leave to the reader. 

Now, suppose that there is no $X \subseteq X'$ of size $k$ that has at least $99r$ common out-neighbours in $B$. 

For all $y \in B$, define $d(y) = |N^{-}(y) \cap B|$. By the first observation, we know that there are at least $49|X'|t = 147tk$ edges oriented from $X'$ to $B$. Hence, 
$$\sum_{y \in B} d(y) \geq 147tk$$

Further, consider a vertex $y \in B$. There are ${d(y) \choose k}$ $k$-subsets in $X'$ which lie in the in-neighbourhood of $y$. In turn, no $k$-subset of $X'$ can lie in the neighbourhood of at least $99r$ vertices $y \in B$. Thus, we have

$$\sum_{y \in B} {d(y) \choose k} < 99r {3k \choose k}$$

By convexity of the binomial coefficient and Jensen's inequality, the last two inequalities imply
$$t {1.48k \choose k} < r {3k \choose k}$$
which is a contradiction since $r = 2^{3k}$ and $t = 2^{6k}$.

We can then find such a set $X$. Note that $X$ has size $k$ and induces a transitive tournament. Also note by the pigeonhole principle that there is some $i' \in \{i+t,i+2t,\ldots,i+99t\}$ such that $X$ has at least $r$ common out-neighbours in $V[i',i'+t)$. 
\end{proof}

We can now use Lemma 2 to construct the $k$th power of a path of large length in our tournament. We do this recursively as follows: 

Start with the set $U_0 = V[0,r) \subseteq V[0,t)$; by Lemma 2, there is a subset $X_0 \subseteq U_0$ of size $k$ with the properties stated; therefore, there exists some $i_1 \in \{t,2t,\ldots,99t\}$ and a subset $U_1 \subseteq V[i_1,i_1+t)$ of size $r$ which lies in the common out-neighbourhood of $X_0$; now, do the same thing with $U_1$; there is a subset $X_1 \subseteq U_1$ with the properties stated in the lemma; then, there exists some $i_2 \in \{i_1 + t,i_1 + 2t,\ldots,i_1 + 99t\}$ and a subset $U_2 \subseteq V[i_2,i_2+t)$ of size $r$ which lies in the common out-neighbourhood of $X_1$; continue this process until $i_j > n - 100t$; when this is reached, take a subset $X_j \subseteq U_j$ of size $k$ which induces a transitive tournament.

Note the construction before gives us a sequence of pairwise disjoint sets $X_0,X_1, \dots, X_j$, each one inducing a transitive tournament on $k$ vertices and such that for each $i$, every edge in $E[X_i,X_{i+1}]$ is oriented towards $X_{i+1}$. In turn, this gives the $k$th power of a path of length $(j+1)k - 1$. Note that by construction, we have $i_j \leq 99 j t$ and by considering when the process must stop, we also have $i_j > n - 100t$. This implies $j > \frac{n}{100t} - 1$, thus giving a $k$th power of a path of length at least $\frac{n}{2^{6k+7}}$.
\end{proof}

\begin{rem}
The constant in the exponent of our bound can be optimized, but we made no attempt to do this. We can also use the dependent random choice method instead of Lemma 2 in order to construct the sets $X_0,X_1 \dots, X_j$. This gives a slightly better constant.
\end{rem}

\end{document}